\pdfoutput=1 

\RequirePackage[l2tabu, orthodox]{nag}

\documentclass[12pt]{amsart}
\usepackage{colonequals}
\usepackage{appendix}
\usepackage{tikz-cd}
\usepackage[utf8]{inputenc} 
\usepackage{fullpage,url,amssymb,enumerate,colonequals,graphicx}
\usepackage[all]{xy} 
\usepackage{mathrsfs} 

    \DeclareFontFamily{U}{wncy}{}
    \DeclareFontShape{U}{wncy}{m}{n}{<->wncyr10}{}
    \DeclareSymbolFont{mcy}{U}{wncy}{m}{n}
    \DeclareMathSymbol{\Sha}{\mathord}{mcy}{"58} 

\usepackage[T1]{fontenc}
\usepackage[utf8]{inputenc}

\usepackage[dvipsnames,xcdraw,hyperref]{xcolor}

\DeclareMathOperator{\Hilb}{Hilb}

\newcommand{\Aff}{\mathbb{A}}
\newcommand{\C}{\mathbb{C}}

\newcommand{\F}{\mathbb{F}}
\newcommand{\G}{\mathbb{G}}

\newcommand{\PP}{\mathbb{P}}
\newcommand{\Q}{\mathbb{Q}}

\newcommand{\Z}{\mathbb{Z}}

\newcommand{\TT}{\mathbf{T}}
\newcommand{\ti}{\mathbf{t}}

\newcommand{\calL}{\mathcal{L}}

\newcommand{\calN}{\mathcal{N}}
\newcommand{\calO}{\mathcal{O}}

\newcommand{\calT}{\mathcal{T}}

\newcommand{\calV}{\mathcal{V}}

\newcommand{\calX}{\mathcal{X}}

\DeclareMathOperator{\Bl}{Bl}
\DeclareMathOperator{\sep}{sep}
\DeclareMathOperator{\Spec}{Spec}

\DeclareMathOperator{\Aut}{Aut}

\DeclareMathOperator{\Char}{char}

\DeclareMathOperator{\Gal}{Gal}

\DeclareMathOperator{\im}{im}

\DeclareMathOperator{\Pic}{Pic}

\DeclareMathOperator{\Proj}{Proj}

\DeclareMathOperator{\Stab}{Stab}

\DeclareMathOperator{\Sym}{Sym}

\newcommand{\PGL}{\operatorname{PGL}}

\usepackage{amsthm}
\newtheorem{theorem}{Theorem}[section]
\newtheorem{lemma}[theorem]{Lemma}

\newtheorem{corollary}[theorem]{Corollary}
\newtheorem{proposition}[theorem]{Proposition}

\theoremstyle{definition}

\theoremstyle{remark}
\newtheorem{remark}[theorem]{Remark}

\makeatletter
\g@addto@macro\bfseries{\boldmath} 
\makeatother

\usepackage{tikz}
\usepackage{microtype}  

\usepackage[
	pagebackref,
	pdfauthor={Xinyu Fang}, 
]{hyperref}
\usepackage{cleveref}
\begin{document}

\title{Galois group of exceptional curves on the generic del Pezzo surface}
\subjclass[2020]{Primary 14J26; Secondary 14J10, 14G12, 11G35.}
\keywords{Del Pezzo surface, cubic surface, exceptional curve, Galois action, monodromy, moduli space, Brauer-Manin obstruction}
\author{Xinyu Fang}
\address{Department of Mathematics, Harvard University, Cambridge, MA 02138, USA}
\email{xinyufang@math.harvard.edu}
\urladdr{\url{https://sites.google.com/view/xinyufang/}}
\date{Mar 29, 2026}

\begin{abstract}
We prove that the Galois action on the exceptional curves on the generic del Pezzo surface of degree $d$ is maximal for all degrees $d$ and  over any field $k$.
As a consequence of the case $d=3$, we deduce that over $\F_q(u)$, 100\% of cubic surfaces have no Brauer-Manin obstruction.
\end{abstract}

\maketitle
\tableofcontents
\section{Introduction}
\label{sec:intro}
Let $k$ be any field and let $k_s$ be a separable closure.
A \emph{del Pezzo surface} over  $k$ is a smooth projective surface over $k$ with ample anticanonical divisor, which we denote by $-K_X$. Its \emph{degree} is defined to be the self-intersection number $d=(-K_X)^2$. We have the following classical result on the classification of del Pezzo surfaces (see, for example, \cite[Theorem 2.1.1]{VarillyAlvaradoThesis} or \cite[Theorem 24.4]{Manin1986CubicForms}).
\begin{theorem}
    Let $X$ be a del Pezzo surface of degree $d$ over $k$. Then $X_{k_s}$ is isomorphic to the blow-up of $\PP^2_{k_s}$ at $(9-d)$ points in general position in $\PP^2(k_s)$, or $d=8$ and $X$ is isomorphic to $\PP^1_{k_s}\times \PP^1_{k_s}$.
\end{theorem}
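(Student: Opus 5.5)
The plan is to reduce to the algebraically closed case, which is harmless: over $\bar k$ a del Pezzo surface $Y$ is a smooth projective rational surface with $-K_Y$ ample, and the descent to $k_s$ is handled at the end. Over $\bar k$, rationality comes from Castelnuovo's criterion. Indeed $H^0(Y,mK_Y)=0$ for all $m\ge 1$ because $-K_Y$ is ample, and $H^1(Y,\calO_Y)=H^1(Y,K_Y)^\vee=0$ by Kodaira vanishing in characteristic $0$. In positive characteristic I would instead use the classification of surfaces together with $K_Y^2>0$ and $-K_Y$ ample, which rules out nonrational surfaces. So $Y$ is rational, and hence it is either $\PP^2$, a Hirzebruch surface $\mathbb{F}_n$, or a blow-up of one of these.

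Next I run the minimal model program on $Y$. Any $(-1)$-curve $E$ on $Y$ can be contracted, giving a smooth surface $Y'$ with $K_{Y'}^2=K_Y^2+1$. I claim $-K_{Y'}$ is still ample. To check this with Nakai--Moishezon, let $C'$ be a curve on $Y'$ with strict transform $C$. Then $-K_{Y'}\cdot C'=-K_Y\cdot C+(E\cdot C)>0$, and $K_{Y'}^2>0$. Iterating, $Y$ is a blow-up of a minimal del Pezzo surface. Among $\PP^2$ and the $\mathbb{F}_n$, the only ones with $-K$ ample are $\PP^2$ and $\mathbb{F}_0=\PP^1\times\PP^1$; on $\mathbb{F}_1$ the negative section is a $(-1)$-curve, and for $n\ge2$ the negative section $C$ satisfies $-K\cdot C=2-n\le 0$. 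Since $\mathbb{F}_0$ blown up at one point is isomorphic to $\PP^2$ blown up at two points, every case other than $Y\cong\PP^1\times\PP^1$ is a blow-up of $\PP^2$ at $r$ points, and $K^2=9-r=d$ gives $r=9-d$.

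The main obstacle is showing that the blown-up points are \emph{in general position}, meaning they are distinct (not infinitely near), no $3$ lie on a line, no $6$ lie on a conic, and no $8$ lie on a singular cubic through one of them with a node there. I would argue by contradiction in each case. Each degenerate configuration produces an effective curve $C$ on $Y$ with $-K_Y\cdot C\le 0$, contradicting ampleness:
\begin{enumerate}[(i)]
\item an infinitely near point gives a $(-2)$-curve $E_i-E_j$;
\item three collinear points give the strict transform $H-E_i-E_j-E_k$ of the line;
\item six points on a conic give $2H-\sum_{i=1}^{6}E_i$;
\item the degenerate cubic in the case $r=8$ gives $3H-\sum_{i=1}^{8}E_i-E_j$.
\end{enumerate}
In every case $C^2=-2$ and $K_Y\cdot C=0$.

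Finally comes descent to $k_s$. A del Pezzo surface over $k_s$ of degree $\le 7$ contains $(-1)$-curves. These are finitely many and rigid, so they are defined over $k_s$: they are fixed by $\Aut(\bar k/k_s)$, which acts trivially on $\Pic$ after base change since $k_s\subset\bar k$ is purely inseparable. The contractions are therefore defined over $k_s$. The minimal model is then $\PP^2$ or $\PP^1\times\PP^1$ over $k_s$, because Brauer--Severi varieties and conics over a separably closed field are split. The blown-up points are $k_s$-points because they are images of the $k_s$-curves $E_i$.
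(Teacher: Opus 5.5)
Your argument over $\bar k$ is the standard one, and it holds up. The paper does not prove this theorem itself; it cites Várilly-Alvarado and Manin. The formula $-K_{Y'}\cdot C'=-K_Y\cdot C+E\cdot C$, the list of minimal models, and the $(-2)$-classes in (i)--(iv) are all right. Reducible conics and cubics reduce to (ii) and (iii), and a line through four or more points gives $-K_Y\cdot C<0$. Your characteristic $p$ rationality step is only sketched, but it closes: $h^0(mK_Y)=0$ gives $\kappa=-\infty$, so $Y$ is birationally ruled over a curve of genus $g$. It therefore dominates a $\PP^1$-bundle with $K^2=8(1-g)$, and $0<K_Y^2\le 8(1-g)$ forces $g=0$.

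\textbf{The gap is in the descent to $k_s$.} You justify that the $(-1)$-curves are defined over $k_s$ by saying they are fixed by $\Aut(\bar k/k_s)$. This is vacuous. Precisely because $\bar k/k_s$ is purely inseparable, $\Aut(\bar k/k_s)$ is trivial, so every closed subscheme of $X_{\bar k}$ is fixed by it. Yet not every one descends: the point $a^{1/p}\in\Aff^1(\bar k)$ with $a\in k_s\setminus k_s^p$ is fixed but not defined over $k_s$. Finiteness does not help either, since a finite set of geometric points can be inseparable over $k_s$. Rigidity is the right instinct, but it has to be used infinitesimally, not through automorphisms. There are two ways to do this.

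\emph{Via the Hilbert scheme.} The Hilbert scheme of $X_{k_s}$ has Zariski tangent space $H^0(E,\calN_{E/X})=H^0(\PP^1,\calO(-1))=0$ at $[E]$. So it is unramified over $k_s$ there, and the image point has residue field finite separable over $k_s$, hence equal to $k_s$. Thus $E$ is defined over $k_s$.

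\emph{Via the Picard scheme.} As in the proof of Lemma~\ref{lem:param-for-degree-3-7}, $H^1(X,\calO_X)=H^2(X,\calO_X)=0$ makes $\underline{\Pic}_{X/k}$ étale. Using $\mathrm{Br}(k_s)=0$ and $X(k_s)\neq\emptyset$, the map $\Pic X_{k_s}\to\Pic X_{\bar k}$ is then bijective. Since $h^0(\calO(E))=1$, the nonzero section of the descended line bundle cuts out $E$ over $k_s$.

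This is exactly the fact the paper quotes from \cite[Theorem 9.3.1(b)(ii)]{qpoints}. Once it is in place, the rest of your descent goes through: you contract over $k_s$, and the minimal model is split, because $\mathrm{Br}(k_s)=0$ and the two rulings of $\PP^1\times\PP^1$ descend by the same Picard argument. The blown-up points are then $k_s$-rational.
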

Given a smooth projective surface $X$ over $k$, 
an \emph{exceptional curve} on $X$ is an irreducible curve $C\subset X_{\overline{k}}$ such that $C^2=C\cdot K_X=-1$, where $\overline{k}$ is an algebraic closure of $k$. In fact, any such curve is necessarily defined over $k_s$ (see, for example, \cite[Theorem 9.3.1(b)(ii)]{qpoints}). By adjunction, such a curve has arithmetic genus 0, so it is isomorphic to $\PP^1_{k_s}$. The Galois action on $X_{k_s}$ induces an action of $\Gal(k_s/k)$  on the set of exceptional curves on $X_{k_s}$. 

In this paper, we study this Galois action on exceptional curves for the \emph{generic} del Pezzo surface of each degree over $k$ (defined in \Cref{sec:generic}). 
We focus on the cases  $1\leq d\leq 7$, since the cases $d=8,9$ are trivial. 

Given a del Pezzo surface $X$ of degree $1\leq d\leq 7$ over $k$, let $n_d$ be the number of exceptional curves on $X_{k_s}$ and $\Gamma_d$ be the incidence graph for these curves (i.e., the graph whose vertices are the exceptional curves and whose edges record the incidences among them). Write $\Aut(\Gamma_d)$ for the group of incidence-preserving automorphisms of the configuration of exceptional curves on $X_{k_s}$.
We have the following table (see \cite{Manin1986CubicForms}, Theorems 25.4 and 26.2).
\begin{table}[h]
\centering
    \begin{tabular}{c|c|c|c|c|c|c|c}
    \label{tab:aut-groups-of-lines}
       $d$ & 1 & 2 & 3 & 4 & 5 & 6 & 7\\\hline
       $n_d$ & 240 & 56 & 27 & 16 & 10 & 6 & 3 \\\hline
       $\Aut(\Gamma_d)$ & $W(E_8)$ & $W(E_7)$ & $W(E_6)$ & $W(D_5)$ & $W(A_4)\simeq S_5$ & $W(A_2\times A_1)\simeq D_6$ & $S_2$\\\vspace{.05in}
    \end{tabular}
    \caption{Exceptional curves on del Pezzo surfaces and their permutation groups}
\end{table}

Since the Galois action on the exceptional curves on $X$ must preserve their incidences, the image of this action is a subgroup of $\Aut(\Gamma_d)$.
We say that the action is \emph{maximal} if the image is equal to the whole group $\Aut(\Gamma_d)$.
Let $G_d$ denote the Galois image in $\Aut(\Gamma_d)$ for the generic del Pezzo surface of degree $d$ over $k$. Our main result is the following:
\begin{theorem}
\label{thm:main}
    For any field $k$ and any $1\leq d\leq 7$,
    $G_d=\Aut(\Gamma_d)$. In other words, the generic del Pezzo surface has maximal Galois action on its exceptional curves.
\end{theorem}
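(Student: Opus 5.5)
We take the generic del Pezzo surface of degree $d$ to be the blow-up of $\PP^2$ at $r=9-d$ general points, with the points varying over an open subset $U$ of $(\PP^2)^r$, or of a suitable quotient/moduli space. Its function field $K$ carries the generic fiber $X_\eta$. The plan is to compute $G_d$ as a monodromy group. Let $\calX\to U$ be the family and let $E\to U$ be the relative scheme of exceptional curves, which is finite étale of degree $n_d$. Then $G_d$ is the image of $\pi_1^{\text{ét}}(U,\bar\eta)$ acting on the geometric fiber of $E$, and this equals $\Gal(L/K)$ for $L$ the splitting field of $E_\eta$.

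\textbf{Step 1: reduction to the prime field and a base-change argument.} Since the generic surface over $k$ is obtained from the one over the prime field $k_0$ by the extension $k_0(U)\subset k(U)$, the Galois image over $k$ is a subgroup of the one over $k_0$. So we must argue directly, uniformly in $k$. Alternatively we note that $G_d$ can only shrink by a constant-field extension. Since $U$ is geometrically irreducible, the geometric monodromy (over $\bar k$) is a normal subgroup of the arithmetic one. It therefore suffices to prove that the \emph{geometric} monodromy is all of $\Aut(\Gamma_d)=W(E_{9-d})$. The geometric monodromy only gets smaller under specialization, by Grothendieck's specialization of $\pi_1$ in the tame or prime-to-$p$ parts, together with direct arguments. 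Our strategy is therefore to exhibit enough explicit elements inside it.

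\textbf{Step 2: generating $W(E_r)$ by reflections.} Recall that $W(E_r)$ (with $E_5=D_5$, $E_4=A_4$, $E_3=A_2\times A_1$) is generated by the reflections in the simple roots $e_i-e_{i+1}$ ($1\le i<r$) and $h-e_1-e_2-e_3$. We realize each reflection as a local monodromy around a boundary divisor of $U$ in $(\PP^2)^r$:
\begin{itemize}
\item The transposition $s_{e_i-e_{i+1}}$ comes from the relabeling symmetry. More concretely, if we work on the ordered configuration space, it arises from a loop around the diagonal $p_i=p_{i+1}$; after a degree-$2$ base change, the degeneration acquires an $A_1$ singularity, and Picard--Lefschetz gives the reflection.
\item The reflection in $h-e_1-e_2-e_3$ comes from a loop around the divisor where $p_1,p_2,p_3$ become collinear. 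There the line through them becomes a $(-2)$-curve, and the monodromy is the reflection in its class.
\end{itemize}
Each of these degenerations produces a surface with a single $A_1$ singularity, that is, a weak del Pezzo surface with one $(-2)$-curve. The local monodromy of a Lefschetz-type degeneration acting on $\Pic$ is the Picard--Lefschetz reflection $x\mapsto x+(x\cdot C)C$.

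\textbf{Step 3: the positive characteristic issue.} This is the main obstacle. In characteristic $2$ the Picard--Lefschetz formula for an $A_1$ degeneration can fail, since the local monodromy may be wild or trivial on cohomology. To handle this, we would avoid topology entirely and argue algebraically. Over the henselization of $U$ at the generic point of the collinearity divisor, we compute explicitly the splitting field of the exceptional curves. For example, the conic through $p_1,\dots,p_5$ versus the lines and the $e_i$ get interchanged, and this is witnessed by an explicit quadratic equation in the coordinates whose discriminant vanishes exactly on the divisor. In characteristic $2$ we use an Artin--Schreier form instead of a discriminant. We then show directly that this quadratic extension is nontrivial, giving an element of the inertia group that acts as the desired reflection. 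This explicit verification, done uniformly in characteristic, is where we expect most of the work to lie.

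Once all generating reflections lie in $G_d$, we conclude $G_d=W(E_{9-d})=\Aut(\Gamma_d)$. The small cases $d=5,6,7$ can also be checked by hand as a consistency test.
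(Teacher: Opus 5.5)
\textbf{There is a genuine gap, and it starts with the choice of family.} Over $U\subset(\PP^2)^r$ the points $p_1,\dots,p_r$ are $k(U)$-rational. So $h$ and $e_1,\dots,e_r$ are classes of divisors defined over $k(U)$, and the Galois group acts trivially on $\Pic$ of the geometric generic fiber, hence on the exceptional curves. Indeed, each line through $p_i,p_j$, conic through five of the points, etc.\ is already defined over $k(U)$, so the monodromy of your family is trivial.

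Passing to unordered points does not rescue this. The blow-down to $\PP^2$, and hence $h$, is still defined over the base. So the image lies in the stabilizer of $h$, which is $S_r$ permuting the $e_i$, and this is a proper subgroup of $W(E_{9-d})$ for $d\le 6$. In particular the reflection in $h-e_1-e_2-e_3$, which sends $h$ to $2h-e_1-e_2-e_3$, can never arise as monodromy of a family of blow-ups of $\PP^2$.

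Your local pictures in Step 2 fail for the same reason:
\begin{itemize}
\item The diagonal $p_i=p_{i+1}$ has codimension $2$ in $(\PP^2)^r$, so by purity there is no loop around it.
\item Over the collinearity divisor the blow-up family stays smooth and proper; the line simply becomes a $(-2)$-curve on a weak del Pezzo surface. So there is no local monodromy on $\Pic$ there.
\end{itemize}
The $A_1$ singularity and its reflection appear only on the anticanonical model. Ordered point configurations parametrize \emph{marked} surfaces, and over them that monodromy has already been trivialized. This is why $G_d$ in the statement is defined through the parameter space $\mathscr{H}_d$ of anticanonical models, not through point configurations.

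\textbf{Moving to $\mathscr{H}_d$ leaves the essential inputs unproved.} There your reflection strategy becomes the classical one, with the discriminant being the locus of nodal surfaces. You would still need to show:
\begin{itemize}
\item inertia at the generic point of the discriminant acts by a reflection in every characteristic (your Step 3, which you defer and which is genuinely delicate in characteristic $2$);
\item geometric monodromy is generated by conjugates of these inertia groups;
\item the resulting reflection subgroup is all of $W(E_{9-d})$, not a proper full-rank one such as $W(A_7)\subset W(E_7)$ or $W(D_8)\subset W(E_8)$.
\end{itemize}
You also cannot prescribe which simple reflections occur without already controlling the global monodromy.

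\textbf{How the paper avoids this.} It uses no local monodromy at all.
\begin{itemize}
\item Transitivity of $G_d$ comes from geometric irreducibility of the incidence scheme $\mathscr{Y}_d$. For $d\le 4$ it is fibered over the space of lines with linear fibers; for $d=5,6$ it is a homogeneous space.
\item A full point stabilizer $\Aut(\Gamma_{d+1})$ is produced by blowing up the generic point of a degree-$(d+1)$ surface with maximal Galois action, over that surface's own function field. This uses that extending the base field can only shrink the monodromy image.
\end{itemize}
A transitive subgroup containing a point stabilizer is the whole group. Downward induction from $d=7$ then finishes the proof, with no dependence on the characteristic.
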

\begin{remark}
    The main new content in this theorem is the cases $1\leq d\leq 4$ over an arbitrary base field $k$. When $d=7$, the situation is very simple. The cases $5\leq d\leq 6$ follow immediately from \cite[Theorems 1.2 and 1.7]{Zaitsev2023_FormsDelPezzo5_6}, but our proof provides an alternative approach. The case $d=3$ and $k=\C$ is very classical, proven as early as \cite[III.II.V]{Jordan1870}; see also \cite[\S 4]{Todd_1935} and \cite[\S III.3]{Harris1979Galois}. 
    This has been extended to $d=3$ and $\Char k\neq 2,3$ in \cite[Proposition 4.8]{Achter_2013} using an arithmetic Torelli map which identifies cubic surfaces with marked lines with certain principally polarized abelian fivefolds with level structure, and the existence of smooth compactifications of Shimura varieties. See \cite[\S 1.2]{kaya2025inverse} for a survey of some other partial results (for example, over specific fields). In contrast to most of these partial results, our proof is uniform over all fields $k$ and in particular does not depend on the characteristic.
\end{remark}

We deduce a consequence for the Brauer-Manin obstruction on cubic surfaces over $\F_q(u)$.
\begin{corollary}
    100\% of cubic surfaces over $\F_q(u)$, when ordered by height of the coefficients of their equations, have no Brauer-Manin obstruction.
\end{corollary}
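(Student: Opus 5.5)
The plan is to deduce the corollary from \Cref{thm:main} for $d=3$ by a Hilbert irreducibility argument over the global function field $K=\F_q(u)$, together with the classical consequence of maximal Galois image for the Brauer group. The first step is algebraic. Let $X$ be a smooth cubic surface over $K$ whose Galois image on the $27$ lines is all of $W(E_6)$. Then $\Pic(X_{K_s})\otimes\Q$ restricted to $K_{-X}^{\perp}$ is the $E_6$ root lattice with the full Weyl group acting. Hence $\mathrm{H}^1(K,\Pic X_{K_s})=\mathrm{H}^1(W(E_6),\Pic)=0$; this is a finite group-cohomology computation, see e.g.\ the tables of Swinnerton-Dyer or \cite{Manin1986CubicForms}. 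Since $\mathrm{Br}\,X/\mathrm{Br}\,K \hookrightarrow \mathrm{H}^1(K,\Pic X_{K_s})$ (using $\mathrm{H}^3(K,\G_m)=0$ for global fields), we get $\mathrm{Br}\,X=\mathrm{Br}\,K$. Constant algebras pair trivially with adelic points by reciprocity, so $X(\mathbb{A}_K)^{\mathrm{Br}}=X(\mathbb{A}_K)$. In other words, there is no Brauer--Manin obstruction, whether or not $X$ has adelic points.

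The second step shows that $100\%$ of cubic forms have maximal image. Let $\mathcal{U}\subset \Aff^{20}$ be the open set of coefficient vectors giving smooth cubics, and let $\mathcal{L}\to\mathcal{U}$ be the étale cover parametrizing ordered $27$-tuples of lines, a $W(E_6)$-torsor. By definition of the generic cubic surface and \Cref{thm:main} with $k=\F_q$, the monodromy of this cover is all of $W(E_6)$; that is, it is geometrically irreducible over $\F_q$. The base field is taken as $\F_q$ here and not $\overline{\F}_q$, so one must check that the theorem's statement over $k=\F_q$ is what is needed. It is, since the arithmetic monodromy is what governs specializations.

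The main step is a quantitative Hilbert irreducibility theorem over $\F_q(u)$. Clearing denominators, we order cubic forms by coefficients in $\F_q[u]$ of degree $\le n$. For each proper subgroup $H\subsetneq W(E_6)$, the points whose Frobenius/decomposition image lies in a conjugate of $H$ come from $K$-points of the cover $\mathcal{L}/H\to\mathcal{U}$, which is geometrically irreducible of degree $>1$ only after possible constant extension. The strategy is to handle this by a large sieve or Cohen-type argument. Reduce modulo many primes $\pi\in\F_q[u]$ of moderate degree. By Chebotarev/Lang--Weil for the $W(E_6)$-cover over the residue fields $\F_{q^{\deg \pi}}$, each conjugacy class appears with positive density. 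If every class, including constant-field effects, appears, then the image avoids every proper $H$ by Jordan's lemma. The large sieve then gives a proportion $O(q^{-cn})\to 0$ of exceptional forms.

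The main obstacle is the constant-field issue. The cover $\mathcal{L}$ might fail to be geometrically irreducible over $\overline{\F}_q$, in which case the geometric monodromy $W(E_6)^{\mathrm{geom}}$ is a normal subgroup with cyclic quotient. Over $\F_q(u)$ Frobenius classes then land in a coset, and the sieve must be run within cosets. The fix is to apply \Cref{thm:main} with $k=\overline{\F}_q$ as well, which gives geometric monodromy equal to the full $W(E_6)$ and removes the issue. With that, the sieve and the first step combine to give the corollary.
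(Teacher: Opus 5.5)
Your proposal is correct and follows the paper's route. Maximal $W(E_6)$ image forces $H^1(K,\Pic X_{K_s})=0$, hence no Brauer--Manin obstruction; the paper gets this vanishing from Swinnerton-Dyer's double-six/triple-nine lemmas, while you cite the tabulated cohomology of $W(E_6)$. Theorem~\ref{thm:main} for $d=3$ is then fed into an effective Hilbert irreducibility theorem over $\F_q(u)$, and your large-sieve/Chebotarev/Jordan sketch is exactly the mechanism behind the Bary-Soroker--Entin result the paper cites. Your extra step of applying Theorem~\ref{thm:main} over $\overline{\F}_q$ to rule out a constant-field extension is valid but not needed in the paper's version, where the constant-field degree $r$ only enters the error term $O(q^{-(d+1)/2+r}(d+1))$, which still tends to $0$.
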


From now on, we fix an arbitrary field $k$ unless otherwise specified.

\section{The generic del Pezzo surface}
\label{sec:generic}

In this section, for each $1\leq d\leq 7$, we define a parameter space $\mathscr{H}_d$ for del Pezzo surfaces $X$ over $k$ of degree $d$ with their natural embeddings into the relevant (weighted) projective space. Once we see that these parameter spaces are integral schemes, we define the \emph{generic del Pezzo surface of degree $d$} to be the generic fiber of the universal family over $\mathscr{H}_d$. 

To define $\mathscr{H}_d$, it is convenient to separate into two cases based on the very ampleness of the anticanonical bundle. 

\subsection{Degrees $3\leq d\leq 7$}
When $3\leq d\leq 7$, the anticanonical bundle on $X$ is very ample and gives an embedding
\[
|-K_X|: X\hookrightarrow\PP^d
\]
as a degree $d$ surface. 
We can compute (as in \cite[Lemma III.3.2.2]{kollar1996rational}) using Riemann-Roch for surfaces  and the identities $\chi(\calO_X)=1$  and $(K_X)^2=d$ that any del Pezzo surface of degree $d$ with the anticanonical embedding has
Hilbert polynomial
\[
P_d(m)=\chi(X,\calO_X(-mK_X))=\frac{d}{2}m(m+1)+1. 
\]
Thus, we can define $\mathscr{H}_d$ as a subscheme of the Hilbert scheme $\Hilb_{\PP^n}^{P_d}$, which is a projective scheme parametrizing closed subschemes of $\PP^d$ with Hilbert polynomial $P_d$ (see \cite[Theorem 1.1]{hartshorne-def-thry} or \cite[Theorem 1.4]{kollar1996rational}). The following lemma shows that $\mathscr{H}_d$ is a smooth connected quasiprojective scheme.

\begin{lemma}
\label{lem:param-for-degree-3-7}
    Let $3\leq d\leq 7$.
    The subspace $\mathscr{H}_d\subset \Hilb^{P_d}_{\PP^d}$ parametrizing del Pezzo surfaces of degree $d$ embedded anticanonically into $\PP^d$ is a smooth, connected, open subscheme of $\Hilb^{P_d}_{\PP^d}$.
    \begin{proof}
        We first show that the
         locus $\mathscr{H}_d$ in $\Hilb_{\PP^d}^{P_d}$ is open. It is characterized by the condition that it parametrizes smooth surfaces $X$ with $-K_X\simeq\calO_X(1)$.
        Smoothness is open (see, e.g., \cite[Tag 01V4]{stacks-project-01V4}). We show that the condition $-K_X\simeq \calO_X(1)$ is also open. Let $U\subset \Hilb_{\PP^d}^{P_d}$ denote the open subscheme where the universal family $V\to U$ is smooth.  Consider the relative Picard scheme $\underline{\Pic}_{V/U}$ (for its existence, see \cite{FGAexplained}, Theorem 9.4.8). Since $-K_X\simeq\calO_X(1)$ holds for fibers $X$ that are del Pezzo surfaces, it suffices to show that the Picard scheme is \'etale above the points parametrizing del Pezzo surfaces. For a del Pezzo surface $X$, we have $h^1(X,\calO_X)=h^2(X,\calO_X)=0$ (see \cite[Lemma III.3.2.1]{kollar1996rational}). Given a parameter $t\in U$ for a del Pezzo surface, the 
        above vanishing of cohomology for $X=V_t$ imply that $\underline{\Pic}_{V/U}$ is smooth of relative dimension 0 in an open neighborhood of $t$ (see \cite[Theorem 9.5.11 and Proposition 9.5.19]{FGAexplained}), hence \'etale. Therefore the equality $-K_X\simeq\calO_X(1)$ holds for fibers of $V\to U$ over an open neighborhood of $t$. Thus $\mathscr{H}_d$ is an open subscheme of $U\subset \Hilb_{\PP^d}^{P_d}$, hence  also open.

        Next, we show that $\Hilb^{P_d}_{\PP^d}$ is smooth at each point $[X]\in \mathscr{H}_d$ using the smoothness criterion by Grothendieck \cite{FGA} (see also \cite[Theorem 1.1]{hartshorne-def-thry}), which says that this is true if $X$ is a locally complete intersection and $H^1(X,\calN_{X/\PP^d})=0$. The first condition follows from $X$ being  a smooth surface in $\PP^d$. It suffices to show that $H^1(X,\calN_{X/\PP^d})=0$. To do this, we use the long exact sequence of cohomology associated to the normal bundle sequence
        \[
        0\to\calT_X\to\calT_{\PP^d}|_X\to \calN_{X/\PP^d}\to 0.
        \] 
        Namely,
        \begin{equation}
        \label{eq:LES-for-H^1(N)}
             H^1(X,\calT_{\PP^d}|_X)\to H^1(X,\calN_{X/\PP^d})\to H^2(X,\calT_X).
        \end{equation}
        
        Now by Serre duality, $H^2(X,\calT_X)\simeq H^0(X,\Omega^1_X\otimes\omega_X)^\vee$. 
        In characteristic 0, we can apply the Kodaira-Akizuki-Nakano vanishing theorem with $\calL=\omega_X^{-1}$ ample to get $H^0(X,\Omega^1_X\otimes\omega_X)=0$. Now suppose $\Char k=p>0$ and $k$ is algebraically closed.
        By \cite[Corollaire 2.8]{Deligne1987}, since $\omega_X^{-1}$ is ample and $\dim X=2$, we have that $H^0(X,\Omega^1_X\otimes\omega_X)=0$ as long as $X$ is liftable to $W_2(k)$. In our case, liftability can be shown directly by viewing $X$ as $\PP^2$ blown up at $9-d$ points. Indeed, we can lift the collection of blown up points in $\PP^2$ to a closed codimension 2 subscheme $Z\subset\PP^2_{W_2(k)}$, then form the blowup $\calX=\Bl_Z\PP^2_{W_2(k)}$. This gives the desired lift. Thus, we have shown that regardless of the characteristic, we have $H^2(X,\calT_X)=0$.
        
        For $H^1(X,\calT_{\PP^d}|_X)$, use the dual Euler sequence restricted to $X$:
        \[
        0\to \calO_X\to\calO_X(1)^{\oplus d+1}\to \calT_{\PP^d}|_X\to 0
        \]
        is exact, so we have an exact sequence of cohomology
        \begin{equation}
        H^1(X,\calO_X(1)^{\oplus d+1})\to H^1(X,\calT_{\PP^d}|_X)\to H^2(X,\calO_X).
        \end{equation}
        Now $H^2(X,\calO_X)\simeq H^0(X,K_X)^\lor=0$, and $H^1(X,\calO_X(1))=0$  by Kodaira vanishing (true in all characteristics for any nice surface not of general type or quasi-elliptic by \cite[Theorem II.1.6]{Ekedahl}). These imply that $H^1(X,\calT_{\PP^d}|_X)=0$.
        
        Thus, going back to \eqref{eq:LES-for-H^1(N)}, we have 
        \[
        H^1(X,\calN_{X/\PP^d})=0,
        \]
        so $\mathscr{H}_d$ is smooth at $[X]$.

        We now show  $\mathscr{H}_d$ is also irreducible. Consider the open subscheme $W\subset(\PP^2)^{9-d}$ parametrizing $(9-d)$ points in $\PP^2$ in general position (i.e., no three on a line and no six on a conic). Then there is a family of del Pezzo surfaces $\pi:D\to W$ corresponding to the blow-ups of $\PP^2$ at $(9-d)$ points. Let  $\omega_{D/W}^{-1}$ be the relative anticanonical sheaf on $D$, and let $\mathcal{V}=(\pi_*\omega_{D/W}^{-1})^{\oplus (d+1)}$, which is a locally free sheaf on $W$ corresponding to the vector bundle $\mathbb{V}(\calV)=\Spec_W(\Sym^\bullet \calV^\vee)$. At each closed point $w\in W$, the fiber of this vector bundle is
        \[
        H^0(D_w, -K_{D_w})^{\oplus (d+1)}.
        \]
        There is a nonempty open subscheme inside $\mathbb{V}(\calV)$ that parametrizes pairs $(w,(b_i))$ of a del Pezzo surface $D_w$ and an ordered basis $(b_i)\in H^0(D_w, -K_{D_w})^{\oplus (d+1)}$ which gives an anticanonical embedding of $D_w$ into $\PP^d$. Since every del Pezzo surface of degree $3\leq d\leq 7$ is geometrically the blow-up of $\PP^2$ at $(9-d)$ points, this open subscheme of $\mathbb{V}(\calV)$ maps to $\mathscr{H}_d$, and the map is  surjective on $\overline{k}$-points. Now since $W$ is irreducible, the vector bundle $\mathbb{V}(\calV)$  over $W$ is also irreducible. This implies that $\mathscr{H}_d$ is irreducible.
    \end{proof}
\end{lemma}
\begin{remark}
\label{rem:deg-3-4-parameter-space}
    For $d=3,4$, the del Pezzo surface is a complete intersection of hypersurfaces in $\PP^d$, so a parameter space can also be given directly as a certain open subvariety of the parameter space of such hypersurfaces. More precisely, for $d=3$, we have ${\mathscr{H}_d}\subset \PP^{19}$ is the open subscheme parametrizing smooth cubic forms on $\PP^3$; for $d=4$, we have that ${\mathscr{H}_d}$ is the open subscheme of the Grassmannian of two-dimensional subspaces of the space of quadratic forms on $\PP^4$ parametrizing smooth complete intersections of two quadrics in $\PP^4$.
    The construction above agrees with these alternative descriptions. 
\end{remark}

\subsection{Degrees 1 and 2}
\label{sec:deg-1-2-param-space}
When $d=1$ or $2$, $-K_X$ is no longer very ample. However, we have
\[
X\simeq \Proj R(X,-K_X)
\]
where
\[
R(X,-K_X)=\sum_{m\geq 0}H^0(X,\calO(-mK_X))
\]
is the anticanonical ring of $X$. This description realizes 
$X$ as a smooth hypersurface in a weighted projective space via the embedding given by some generators of $R(X,-K_X)$.

\begin{theorem}[\cite{kollar1996rational}, Theorem III.3.5]
\label{thm:dP12-as-weighted-hypersurface}
    Let $d\in\{1,2\}$. Then a del Pezzo surface $X$ of degree $d$ over $k$ can be described as follows.
    \begin{itemize}
        \item If $d=2$, then $X\subset\PP(1,1,1,2)$ is a hypersurface cut out by a weighted degree 4  polynomial;
        \item If $d=1$, then $X\subset\PP(1,1,2,3)$ is a hypersurface cut out by a weighted degree 6  polynomial.
    \end{itemize}
\end{theorem}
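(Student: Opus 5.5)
The plan is to study the anticanonical ring $R=R(X,-K_X)$ directly and read off its generators and one relation from dimension counts. Riemann--Roch together with the vanishing $h^1(X,\calO_X(-mK_X))=h^2(X,\calO_X(-mK_X))=0$ for $m\geq 0$ (Kodaira vanishing as in the proof of \Cref{lem:param-for-degree-3-7}, valid in all characteristics for del Pezzo surfaces) gives
\[
h^0(X,\calO_X(-mK_X))=P_d(m)=\tfrac{d}{2}m(m+1)+1 .
\]
The ring, its grading, and these dimensions are compatible with base change. Once we have a presentation over $k_s$ (or $\overline{k}$), the corresponding statements over $k$ follow from Galois descent of the graded pieces: choose $k$-bases of $H^0(X,-mK_X)$. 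So I will argue geometrically wherever that is convenient.

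For $d=2$ we have $h^0(-K_X)=3$, and the linear system $|-K_X|$ is base point free, giving a morphism $\phi:X\to\PP^2$ of degree $(-K_X)^2=2$. Let $x,y,z$ be a basis of $H^0(-K_X)$. Then $\Sym^2$ of this space injects into $H^0(-2K_X)$, because $\phi$ is surjective. The image has dimension $6$, while $h^0(-2K_X)=7$, so we add one new generator $w$ of degree 2. In degree 4, the monomials of weighted degree 4 in $x,y,z,w$ number $15+6+1=22$, and $h^0(-4K_X)=21$, so there is at least one relation $F$ of degree 4. $F$ must involve $w^2$: otherwise $X$ would not map generically finitely of degree 2 onto its image. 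We then check that $k[x,y,z,w]/(F)$ has Hilbert function equal to $P_2$ and that it surjects onto $R$. Both rings are Cohen--Macaulay domains of dimension 3, so the surjection is an isomorphism. Taking $\Proj$ gives $X\simeq V(F)\subset\PP(1,1,1,2)$.

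For $d=1$ we have $h^0(-K_X)=2$, $h^0(-2K_X)=4$ and $h^0(-3K_X)=7$. Let $x,y$ span the degree-1 part. In degree 2 the monomials $x^2,xy,y^2$ are independent, so one new generator $z$ is needed. In degree 3 the monomials $x^3,\dots,y^3,xz,yz$ span at most 6 dimensions, so one new generator $w$ is needed. Counting monomials of weighted degree 6 in weights $(1,1,2,3)$ gives 23 against $h^0(-6K_X)=22$, which forces a relation $F$ involving $w^2$ and $z^3$. We then compare Hilbert functions as in the case $d=2$.

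The main obstacle is proving that the monomials are independent in low degrees and that no further generators or relations are needed. Concretely, this means showing that $x,y$ (respectively $x,y,z$) form a regular sequence-like system, so that the multiplication maps are injective. I would handle this by restricting to an anticanonical curve $C\in|-K_X|$, which is a curve of arithmetic genus 1. Using the exact sequences
\[
0\to H^0(-(m-1)K_X)\to H^0(-mK_X)\to H^0(C,-mK_X|_C)\to 0,
\]
which are exact because $h^1=0$, we can reduce to the classical structure of section rings of degree $d$ line bundles on genus-one curves. The remaining step is to check that $V(F)$ is smooth and avoids the singular points of the weighted projective space. This follows because $V(F)\simeq X$ and $F$ contains $w^2$ (and $z^3$ when $d=1$).
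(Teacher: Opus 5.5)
Your outline is correct and is the standard hyperplane-section argument for this result (the paper gives no proof of its own and simply cites Koll\'ar): Riemann--Roch with the vanishing of $h^1,h^2$, the surjections $H^0(X,-mK_X)\to H^0(C,-mK_X|_C)$ for an anticanonical curve $C=\{s=0\}$, and lifting the Weierstrass-type presentation of the section ring $R(C,L)$ of a degree $d$ line bundle on a genus-one curve. When you write it out, take $C$ integral over $\overline{k}$ (every member of $|-K_X|$ is integral when $d=1$, and a general member is when $d=2$, since the only reducible members are sums of two exceptional curves), and then use the lifting directly: $s$ and lifts of generators of $R(C,L)$ generate $R$, equality of Hilbert functions forces $R\cong k[x,y,z,w]/(F)$, and reducing $F$ modulo $s$ to the Weierstrass relation is what shows $F$ contains $w^2$ (and $z^3$), which the monomial count alone does not give.
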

It is easy to check that $X$ must be contained inside the smooth locus of the relevant weighted projective space.
\begin{lemma}[\cite{kollar1996rational}, V.1.3.7]
Assume the weighted projective space $\PP(a_0,\dots,a_n)$ has isolated singularities. Let $m\geq 1$ be divisible by $a_i$ for all $i$. Then the smooth members of $|\calO(m)|$ form a dense open set. If $H\in |\calO(m)|$ is smooth, then $K_H=\calO(H-\sum a_i)|_H$.
\end{lemma}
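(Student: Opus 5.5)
The plan is to prove both assertions of the lemma by working locally on $P=\PP(a_0,\dots,a_n)$ through its standard orbifold charts, so that each claim reduces to a statement about ordinary projective or affine space.

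\textbf{Density of smooth members.} Since each $a_i$ divides $m$, the monomials $x_i^{m/a_i}$ all lie in $H^0(P,\calO(m))$. Hence the linear system $|\calO(m)|$ is base point free; in fact $\calO(m)$ is an ample line bundle, not merely a reflexive sheaf. Openness of the smooth locus holds because smoothness is an open condition in the flat family of hypersurfaces over $\PP H^0(\calO(m))$, in the sense of the Stacks reference cited in \Cref{lem:param-for-degree-3-7}. Density then only requires one smooth member over $\overline{k}$, which I obtain in two steps.
\begin{enumerate}
\item \emph{Avoiding the singular points.} $P$ has finitely many singular points, namely the coordinate vertices $P_i$ with $a_i>1$, by the isolated singularity hypothesis. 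Each condition ``$H\ni P_i$'' is a proper hyperplane condition, witnessed by $x_i^{m/a_i}$. So a general $H$ misses all of them.
\item \emph{Smoothness away from them.} On $P^{\rm sm}$ the system is base point free. I apply Bertini in its characteristic-free form for the morphism $\phi$ given by $|\calO(m)|$: a general member is smooth provided $\phi$ is unramified on $P^{\rm sm}$. I will check this on the charts $U_i=\{x_i\neq 0\}\simeq \Aff^n/\mu_{a_i}$. Away from the singular points, $\mu_{a_i}$ acts freely, and the sections of weight $m$ pull back to $\mu_{a_i}$-invariant polynomials of degree $m$. These already separate tangent vectors, since the invariants include $x_i^{m/a_i-1}$ times suitable linear combinations of monomials.
\end{enumerate}
I expect the unramifiedness check in step 2 to be the main obstacle, because it is where positive characteristic matters. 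If it proves awkward, I will instead pass to a Veronese-type embedding $\phi:P\hookrightarrow\PP^N$ by $\calO(m')$ for some multiple $m'$ of $m$. I will then apply classical Bertini to the very ample system, provided it is a closed embedding on the smooth locus, and use that the multiple-$m$ system contains products of sections.

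\textbf{Adjunction.} Let $H$ be smooth. Step 1 of the density argument shows such $H$ lies in $P^{\rm sm}$, possibly after checking that every smooth member avoids the singular points. If it does not, I argue that a smooth hypersurface through a quotient singularity $\Aff^n/\mu_a$ cut out by an invariant of degree divisible by $a$ must be singular there.

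On $P^{\rm sm}$ the canonical sheaf is $\omega_P=\calO(-\sum a_i)$. This follows from the weighted Euler sequence
\[
0\to\calO\to\textstyle\bigoplus_i\calO(a_i)\to \calT_P\to 0,
\]
which is exact on the smooth locus, where all the $\calO(j)$ are invertible. Equivalently, one can compute it from the generator $\sum_i(-1)^i a_i x_i\,dx_0\wedge\cdots\widehat{dx_i}\cdots\wedge dx_n$ of weight $\sum a_i$. Since $H$ is a Cartier divisor in the smooth variety $P^{\rm sm}$, the usual adjunction formula gives
\[
K_H=(K_P+H)|_H=\calO\bigl(m-\textstyle\sum a_i\bigr)|_H.
\]
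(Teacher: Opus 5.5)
The paper gives no proof of this lemma (it is quoted from Koll\'ar), so your argument has to stand on its own. Its architecture is sound, and the adjunction half is fine. Your hedge there closes in one line: since $a_i\mid m$, every $H\in|\calO(m)|$ is Cartier. A Cartier divisor regular at $q$ gives $\dim\mathfrak m_{P,q}/\mathfrak m_{P,q}^2\le (n-1)+1$, so every smooth member lies in $P^{\mathrm{sm}}$.

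The genuine gap is Step 2, which is the whole content of the density claim in positive characteristic. (In characteristic $0$, Bertini for the base-point-free system on $P^{\mathrm{sm}}$ would already suffice.) You never actually show that $|\calO(m)|$ separates tangent vectors at smooth points, and your sketch does not work as stated:
\begin{itemize}
\item When $\Char k\mid a_i$, $\mu_{a_i}$ is non-reduced and $\Aff^n\to U_i$ is not \'etale even where the action is free, so tangent vectors cannot be read off upstairs.
\item The sections you point to do not suffice. In $\PP(1,1,2,3)$ with $m=6$, at the smooth point $p=(0:0:1:1)$, the local coordinates are $y_0=x_0x_2/x_3$, $y_1=x_1x_2/x_3$ and $\nu=x_2^3/x_3^2$. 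On the chart $x_2\neq0$, every ratio $x_2^2G/x_2^3$ has zero differential at $p$. On the chart $x_3\neq0$, the ratios $x_3G/x_3^2$ miss $d\nu$.
\item Your fallback does not help. Bertini for the very ample $|\calO(m')|$ yields smooth members of $|\calO(m')|$, not of $|\calO(m)|$.
\end{itemize}

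What is missing is an argument that actually uses the hypotheses. For well-formed $\PP$, isolated singularities means the $a_i$ are pairwise coprime. So if $p\in P^{\mathrm{sm}}$ has support $J=\{j:p_j\neq0\}$, then either $|J|\ge2$, or $J=\{j\}$ with $a_j=1$. Pick a Laurent monomial $\mu$ of degree $1$ in the $x_j$, $j\in J$. Then $\{x_j\neq0\ \forall j\in J\}\cong\G_m^{|J|-1}\times\Aff^{n+1-|J|}$. Its coordinates are $y_k=x_k\mu^{-a_k}$ for $k\notin J$, together with a basis of the degree-$0$ Laurent monomials in $x_J$.

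The direction $dy_k$ requires a section $x_kG$ with $G$ a monomial in $x_J$ of degree $m-a_k$. Such a $G$ exists: trivially if $J=\{j\}$ with $a_j=1$. Otherwise, for $j\neq j'$ in $J$, $a_ka_ja_{j'}\mid m$ forces $m-a_k$ to exceed the Frobenius number $a_ja_{j'}-a_j-a_{j'}$.

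The torus directions require the exponent differences of degree-$m$ monomials in $x_J$ to span $\Lambda_J\otimes k$, where $\Lambda_J=\{c\in\Z^J:\sum a_jc_j=0\}$. In fact they generate $\Lambda_J$:
\begin{itemize}
\item The edge steps $a_{j'}e_j-a_je_{j'}$ occur as such differences, since $m\ge a_ja_{j'}$.
\item If $sa_{j_1}+ta_{j_2}=1$, then $e_j\equiv a_j(se_{j_1}+te_{j_2})$ modulo these steps.
\item Hence $\Z^J/\langle a_{j'}e_j-a_je_{j'}\rangle$ is cyclic of rank one, and it maps isomorphically onto $\Z^J/\Lambda_J\cong\Z$.
\end{itemize}
This gives surjectivity of $H^0(\calO(m))\to\calO(m)\otimes\calO_{P,q}/\mathfrak m_q^2$ at every $q\in P^{\mathrm{sm}}$. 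The standard incidence-dimension count then shows the general member is smooth on $P^{\mathrm{sm}}$ in every characteristic.
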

Note that both $\PP(1,1,1,2)$ and $\PP(1,1,2,3)$ have isolated singularities. Combining the two statements above, we can define $\mathscr{H}_d$ for $d\in \{1,2\}$  to be the dense open subscheme of the complete linear system $|\calO_{\PP}(m)|$ parametrizing smooth weighted hypersurfaces of degree $m$, where the relevant weighted projective space $\PP$ and the degree $m$ are given in Theorem \ref{thm:dP12-as-weighted-hypersurface}. These are again integral quasiprojective schemes.

\section{Monodromy formulation and irreducibility of the incidence scheme}
\label{sec:monodromy}
Let $K$ be the function field of $\mathscr{H}_d$. Recall that our goal is to compute the image $G_d$ of the action of $\Gal({K}^{\sep}/K)$ on the exceptional curves on the generic del Pezzo surface of degree $d$ (as defined in \S \ref{sec:generic}). In this section, we reformulate this question in terms of the monodromy of a certain finite \'etale cover $\mathscr{Y}_d$ of $\mathscr{H}_d$ whose fibers parametrize the exceptional curves. We show that this covering space is irreducible for $1\leq d\leq 6$, which is equivalent to saying that the monodromy action is transitive.  

\subsection{Definition of $\mathscr{Y}_d$}
\label{subsec:def-of-Yd}
For each $1\leq d\leq 7$, there is a degree $n_d$ \'etale cover 
\[\pi_d:\mathscr{Y}_d\to \mathscr{H}_d\] whose fibers correspond to the $n_d$ exceptional curves on the del Pezzo surface. Concretely, $\mathscr{Y}_d$ is the incidence scheme
\[
\mathscr{Y}_d\colonequals \{(L, S):L\subset S\}\subset \G_d\times \mathscr{H}_d
\]
where for $3\leq d\leq 7$, $\G_d=\G(1,d)$ is the Grassmannian of lines in $\PP^d$; for $1\leq d\leq 2$, $\G_d$ also parametrizes certain lines in the  weighted projective space. In the latter case, it is defined more precisely as follows.

Let $\PP\colonequals\PP(a_0,\dots,a_n)$ be a weighted projective space with weights $(a_0,\dots,a_n)$ and $U\subset\PP$ be its smooth locus. 
By a \emph{line} in $\PP$ we mean the image of a morphism
\[
f:\PP^1\to \PP
\]
such that $\im(f)\subset U$ and $\deg f^{*}\calO_{\PP}(1)=1$. \footnote{Note that even though $\calO_{\PP}(1)$ is generally not an invertible sheaf on $\PP$, its restriction to the smooth locus of $\PP$ is invertible, so it pulls back along $f$ to an invertible sheaf on $\PP^1$. Whenever we write $f^*\calO_{\PP}(1)$, we implicitly mean $f^*(\calO_{\PP}(1)|_U)$.} 
The exceptional curves on a del Pezzo surface of degree $1$ or 2 with the embedding into weighted projective space are all lines in this sense. 

\begin{lemma}
\label{lem:exceptional-curves-are-lines-in-P}
    Suppose $X\subset\PP$ is a del Pezzo surface of degree $1\leq d\leq 7$ embedded into some (weighted) projective space as in \S \ref{sec:generic}, and $f:\PP^1\hookrightarrow X\hookrightarrow \PP$ is the inclusion of some exceptional curve on $X$. Then $\deg f^*\calO_{\PP}(1)=1$.
    \begin{proof}
        The case when $d\geq 3$ is trivial, but the following argument works for any $1\leq d\leq 7$.
        By \cite[V.1.3.7]{kollar1996rational}, we have that 
        \[
        K_X=\calO_{\PP}(d-\sum a_i)|_X=\calO_{\PP}(-1)|_X
        \]
        where $a_i$ are the weights on $\PP$. Let $L=\im(f)$. By first pulling back $\calO_{\PP}(1)$ to $X$, we have
        \[\deg f^*\calO_{\PP}(1)=L\cdot (-K_X)=-L\cdot K_X=-1,\] 
        where the last inequality follows from  $L^2=-1$ and $2g_L-2=L\cdot (L+K_X)$ by adjunction.
    \end{proof}
\end{lemma}

We can parametrize such morphisms $f:\PP^1\to\PP$ using the following lemma.

\begin{lemma}[lines in weighted projective space, cf. \cite{kollar1996rational}, V.1.3.10]
\label{lem:lines-in-wps}
    Suppose $f:\PP^1\to \PP$ has image inside the smooth locus of $\PP$. Then $f$ is given by 
    \[
    f=(f_0,\dots,f_n):\PP^1\to \PP, \quad f_i\in H^0(\PP^1,\calO_{\PP^1}(da_i))
    \]
    where $d=\deg f^*\calO_{\PP}(1).$
    \begin{proof}
        Write $\PP=\Proj k[T_0,\dots,T_n]$ with $\deg T_i=a_i$.
         Suppose $f^*\calO_{\PP}(1)=\calO_{\PP^1}(d)$, then for any $a\in\Z$, $f^*\calO_{\PP}(a)=f^*(\calO_{\PP}(1))^{\otimes a}=\calO_{\PP^1}(da)$, so the map is given by $f=(f_0,\dots,f_n)$ where $f_i=f^\#T_i\in H^0(\PP^1,\calO_{\PP^1}(da_i))$ since $T_i\in H^0(\PP,\calO_{\PP}(a_i))$. 
    \end{proof}
\end{lemma}
In particular, lines in $\PP$ are images of morphisms of the form $f=(f_0,\dots,f_n)$ with $f_i\in H^0(\PP^1,\calO_{\PP^1}(a_i))$.
We show that, moreover, those appearing as exceptional curves in a del Pezzo surface embedded in $\PP$ must all be equivalent up to automorphisms of $\PP$.
\begin{lemma}
\label{lem:equivalence-of-lines}
    Keep the assumptions in Lemma \ref{lem:exceptional-curves-are-lines-in-P}. Then there is an automorphism of $\PP$ taking $\im(f)$ to the line $L_0$ defined by $g=(u,v,0,\dots,0)$ where $u,v$ are the coordinates on $\PP^1$.
    \begin{proof}
        This is trivial when $3\leq d\leq 7$, since all lines in a straight projective space are projectively equivalent. 
        Let us treat the case $d=1$ or $2$. Write $f=(f_0,f_1,f_2,f_3)$ where $f_i\in H^0(\PP^1,\calO_{\PP^1}(a_i))$. It suffices to show that if $\im(f)$ is an exceptional curve on a del Pezzo surface, then we must have that $f_0,f_1$ are linearly independent (possibly after permuting $f_0,f_1,f_2$ if $d=2$), since in that case the automorphism of $\PP=\Proj(x,y,z,w)$ given by 
        \[
        x\mapsto f_0(x,y), \; y\mapsto f_1(x,y),\; z\mapsto z+ f_2(x,y),\; w\mapsto w+ f_3(x,y)
        \] 
        takes the line $L_0$ to $\im(f)$.
        
        In the case $d=2$, the condition that the image of  $f$ does not pass through the singular point $(0,0,0,1)$ of $\PP=\PP(1,1,1,2)$ already forces $f_0,f_1,f_2$ to span a two-dimensional space. So by permuting the first three coordinates, we can assume that $f_0,f_1$ are linearly independent.
        
        In the case $d=1$, if $f_0,f_1$ were not linearly independent, then the image of this line under the rational map
        \[
        |-K_X|=(x:y):X\dashrightarrow \PP^1 
        \]
        sends $L$ to a point. However, $(x:y)$ restricted to its regular locus has fibers whose closures $D$ are in the linear system $|-K_X|$. We know $(-K_X)\cdot D=(-K_X)^2=1$, so $D$ must be irreducible (recall $-K_X$ is ample). 
        This implies that $L$ is such a fiber and $L^2=1$. This shows that $L$ could not be an exceptional curve, a contradiction. 
    \end{proof}
\end{lemma}
Based on the proof above, we can define, for $d=1$ and $2$, the parameter space $\G_d$ for lines in $\PP$ appearing as exceptional curves as follows. For $d=2$, we saw that $(f_0,f_1,f_2)$ must define a line in $\PP^2$, so we can take $\G_2$ to be the vector bundle over $(\PP^2)^\lor$ whose fiber above each line $L\in (\PP^2)^\lor$ is $H^0(L,\calO_L(2))$, which parametrizes possible choices of $f_3$. For $d=1$, precomposing by $\PGL_2$ brings any $(f_0,f_1)$ to $(u,v)$, so we can take $\G_1=H^0(\PP^1,\calO_{\PP^1}(2))\times H^0(\PP^1,\calO_{\PP^1}(3))\simeq\Aff^7$, parametrizing possible choices of $(f_2,f_3)$. We check at once that any exceptional curve occurs as a unique point in these parameter spaces. 

\subsection{Irreducibility of $\mathscr{Y}_d$} 
\begin{proposition}
\label{prop:Yd-irred}
For all $1\leq d\leq 6$,
    $\mathscr{Y}_d$ is geometrically irreducible.
    \begin{proof}
    By base changing, assume $k=\overline{k}$. 

    First consider the case $1\leq d\leq 4$. Recall that $\mathscr{H}_d$ is an open subscheme of some projective scheme $\overline{\mathscr{H}_d}$ as given in Remark \ref{rem:deg-3-4-parameter-space} and at the end of Section \ref{sec:deg-1-2-param-space} (either some projective space or a Grassmannian). The analogous incidence scheme  $I_d=\{(L, S):L\subset S\}\subset \G_d\times \overline{\mathscr{H}_d}$ has first projection $p_1: I_d\to\G_d$ being a projective morphism. Moreover, $p_1$ has isomorphic fibers, because it is equivariant for the action of $\Aut(\PP)$ on both $I_d$ and $\G_d$, and the action is transitive on $\G_d$.
    Thus, to show that $I_d$ is irreducible, it suffices to show that each fiber is irreducible. Fix $L\in \G_d$. For $1\leq d\leq 3$, $p_1^{-1}(L)\subset\overline{\mathscr{H}_d}$ is an irreducible linear subspace cut out by the condition that the hypersurface in $\PP$ contains $L$. In the case $d=4$, we may consider this linear condition first on the open subscheme of $(\PP^{14})^2$ parametrizing pairs of linearly independent quadratic forms on $\PP^4$, see that it cuts out some irreducible subscheme, and then deduce that the fiber in $\overline{\mathscr{H}_d}$ is irreducible since the locus in $(\PP^{14})^2$ surjects onto it. 
    Thus $I_d$ is irreducible, and $\mathscr{Y}_d$ being an open subscheme is also irreducible.

    When $5\leq d\leq 6$, there is a single isomorphism class of del Pezzo surfaces of degree $d$, and since they are all given the anticanonical embedding,  $\PGL_{d+1}$ acts transitively on $\mathscr{H}_d$. Moreover, if $X$ is a del Pezzo surface of degree $5\leq d\leq 6$, then $\Aut(X)$ acts transitively on the $n_d$ exceptional curves on $X$ (for $d=5$, see \cite[Lemma 9.4.21]{qpoints}; for $d=6$, see \cite[Theorem 8.4.2]{DolgachevCAG}) and embeds as a subgroup of $\PGL_{d+1}$. Thus, $\PGL_{d+1}$ acts transitively on $\mathscr{Y}_d$, so for any $(\ell_0,S_0)\in \mathscr{Y}_d$, we have that 
    \[
    \mathscr{Y}_d\simeq \PGL_{d+1}/\Stab(\ell_0,S_0)
    \]
    is the homogeneous space of an irreducible algebraic group, hence irreducible.
    \end{proof}
\end{proposition}

\subsection{Monodromy formulation}
Since the finite \'etale cover $\pi_d:\mathscr{Y}_d\to\mathscr{H}_d$ has geometric fibers isomorphic to the set of exceptional curves on a del Pezzo surface of degree $d$, the monodromy action defines a homomorphism from the \'etale fundamental group of $\mathscr{H}_d$ to the group of automorphisms of the incidence graph of the exceptional curves, i.e., 
\[
    \phi_d:\pi_1^{\text{\'et}}(\mathscr{H}_d,x)\to \Aut(\Gamma_d),
\]
where $x\in \mathscr{H}_d$ is a geometric point. Since  $\mathscr{H}_d$ is irreducible,  the image $\im(\phi_d)\subset\Aut(\Gamma_d)$  does not depend on the choice of the base point up to conjugacy. 
We can identify $\im(\phi_d)$ with $G_d$ via the following general lemma.

\begin{lemma}[cf. \cite{Szamuely2009}, Theorem 5.4.2]
\label{lem:identification-of-galois-groups}
    Suppose $\pi: Y\to H$ is a finite \'etale cover of integral quasiprojective schemes over $k$. Write $\eta:\Spec K\to H$ for the generic point of $H$, where $K$ is the function field of $H$. Let $x:\Spec K^{\sep}\to H$ be a geometric generic point.  Then the action of $\Gal(K^{\sep}/K)\simeq\pi_1^{\text{\'et}}(\eta,x)$ on the geometric fiber $\pi^{-1}(x)$ factors through the natural action of $\pi_1^{\text{\'et}}(H,x)$ on $\pi^{-1}(x)$.
\end{lemma}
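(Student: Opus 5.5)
The plan is to pass through the equivalence between finite étale covers of $H$ and finite continuous $\pi_1^{\text{\'et}}(H,x)$-sets (Grothendieck's Galois theory, as in \cite{Szamuely2009}), together with the analogous statement for $\Spec K$, where finite étale $K$-schemes correspond to finite continuous $\Gal(K^{\sep}/K)$-sets. The geometric generic point $x:\Spec K^{\sep}\to H$ factors as $\Spec K^{\sep}\to\Spec K\xrightarrow{\eta}H$. Hence $\eta$ induces a continuous homomorphism
\[
\eta_*:\Gal(K^{\sep}/K)\simeq\pi_1^{\text{\'et}}(\Spec K,x)\to\pi_1^{\text{\'et}}(H,x),
\]
by functoriality of the fiber functor. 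The fiber functor on $\Spec K$ is $Z\mapsto Z(K^{\sep})$, and the fiber functor on $H$ at $x$ is $Y\mapsto \pi^{-1}(x)=\operatorname{Hom}_H(\Spec K^{\sep},Y)$.

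First I check that these functors are compatible: for $Y\to H$ finite étale, the generic fiber $Y_\eta=Y\times_H\Spec K$ is finite étale over $K$. Its geometric fiber at $x$ is canonically $\pi^{-1}(x)$, since
\[
\operatorname{Hom}_K(\Spec K^{\sep},Y_\eta)=\operatorname{Hom}_H(\Spec K^{\sep},Y).
\]
So pulling back along $\eta$ gives a functor $\mathrm{FEt}_H\to\mathrm{FEt}_K$ commuting with the fiber functors. A natural automorphism of the fiber functor on $\mathrm{FEt}_K$ therefore restricts to one on $\mathrm{FEt}_H$. This restriction is exactly $\eta_*$, and it makes the Galois action on $\pi^{-1}(x)=Y_\eta(K^{\sep})$ equal to the composite of $\eta_*$ with the monodromy action. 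This is the asserted factorization.

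Concretely, $\sigma\in\Gal(K^{\sep}/K)$ acts on a point $y:\Spec K^{\sep}\to Y$ over $x$ by precomposition with $\Spec\sigma$. That $\Spec\sigma$ fixes $x$ is what makes the action well defined, and this is the same as the automorphism of the fiber functor induced through $\eta$.

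For the application I would also record that $\eta_*$ is surjective since $H$ is normal. Here $H$ is integral, and in our case smooth, by \Cref{lem:param-for-degree-3-7} and \S\ref{sec:deg-1-2-param-space}. Surjectivity means: if $Y$ is connected finite étale over normal $H$, then $Y$ is normal and connected, hence integral, so $Y_\eta$ is the spectrum of a field and stays connected. This gives $G_d=\im(\phi_d)$ rather than just $G_d\subseteq\im(\phi_d)$.

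The main point needing care is this surjectivity step, which uses normality of $H$. The lemma as stated assumes only integrality, and the factorization alone needs no normality. I would cite \cite[Prop.~5.5.4]{Szamuely2009} for it rather than reprove it.
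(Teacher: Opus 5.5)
Your argument is correct and matches the paper's approach. The paper gives no proof beyond citing Grothendieck's Galois theory in Szamuely, and your argument is exactly that functoriality: base change along $\eta$ is compatible with the fiber functors at $x$, which induces $\Gal(K^{\sep}/K)\to\pi_1^{\text{\'et}}(H,x)$ and hence the factorization.

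Your extra remark is also correct and worth keeping. Surjectivity of this map holds because $\mathscr{H}_d$ is smooth, hence normal. The paper's subsequent corollary $\im(\phi_d)=G_d$ tacitly relies on this surjectivity, since the lemma as stated only gives $G_d\subseteq\im(\phi_d)$.
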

\begin{corollary}
\label{lem:identification-of-galois-groups}
    Up to conjugacy, the subgroup $\im(\phi_d)\subset\Aut(\Gamma_d)$ is equal to  $G_d$.
\end{corollary}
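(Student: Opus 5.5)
The corollary should follow from Lemma \ref{lem:identification-of-galois-groups}, applied to $H=\mathscr{H}_d$ and $Y=\mathscr{Y}_d$, once I check that the two group actions land in $\Aut(\Gamma_d)$ compatibly. The hypotheses of the lemma hold by earlier results. $\mathscr{H}_d$ is integral and quasiprojective by Lemma \ref{lem:param-for-degree-3-7} and \S\ref{sec:deg-1-2-param-space}, being smooth, connected and open in a projective scheme. The map $\pi_d$ is finite \'etale of degree $n_d$ by \S\ref{subsec:def-of-Yd}. $\mathscr{Y}_d$ is irreducible by Proposition \ref{prop:Yd-irred} for $d\le 6$; for $d=7$ it is a degree-$3$ \'etale cover of the smooth scheme $\mathscr{H}_7$, hence reduced, and integrality of $Y$ is not really needed for the factorization anyway.

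First I would identify the geometric generic fiber $\pi_d^{-1}(x)$, with $x:\Spec K^{\sep}\to\mathscr{H}_d$, with the set of exceptional curves on the generic del Pezzo surface $X_K$ base changed to $K^{\sep}$. This uses the definition of $\mathscr{Y}_d$ as the incidence scheme $\{(L,S):L\subset S\}$. By Lemmas \ref{lem:exceptional-curves-are-lines-in-P} and \ref{lem:equivalence-of-lines}, and the uniqueness remark after them, each exceptional curve on $X_{K^{\sep}}$ is exactly one $K^{\sep}$-point of the fiber of $\G_d$. Under this identification, the $\Gal(K^{\sep}/K)$-action on the fiber, via $\pi_1^{\text{\'et}}(\eta,x)$, is the Galois action on exceptional curves whose image defines $G_d$. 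Incidences are preserved because the action comes from an automorphism of $X_{K^{\sep}}$ over $K$. So both actions take values in $\Aut(\Gamma_d)$, once $\Gamma_d$ is identified with the curve configuration at $x$ by a choice of marking; that choice is the source of the "up to conjugacy".

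Second, Lemma \ref{lem:identification-of-galois-groups} says the Galois action factors as $\Gal(K^{\sep}/K)=\pi_1^{\text{\'et}}(\eta,x)\to\pi_1^{\text{\'et}}(\mathscr{H}_d,x)\xrightarrow{\phi_d}\Aut(\Gamma_d)$. This gives $G_d\subseteq\im(\phi_d)$. For equality I need the first map to be surjective. This is the standard fact that for a normal integral scheme $H$, $\pi_1^{\text{\'et}}(H,x)$ is the quotient of $\Gal(K^{\sep}/K)$ corresponding to the maximal extension unramified over $H$ (SGA1 V.8.2). It applies here because $\mathscr{H}_d$ is smooth, hence normal.

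Finally, changing the base point $x$ or the marking conjugates both subgroups simultaneously, which gives the equality up to conjugacy. The only point needing care is this surjectivity step, i.e.\ invoking normality of $\mathscr{H}_d$. Everything else is bookkeeping matching the fiber of $\mathscr{Y}_d$ with the exceptional curves.
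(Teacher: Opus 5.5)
Your proposal is correct and follows the paper's route: apply the Szamuely-type factorization lemma stated just before the corollary to $\pi_d\colon\mathscr{Y}_d\to\mathscr{H}_d$, after matching the geometric generic fiber with the exceptional curves on the generic del Pezzo surface. Beyond the paper, you make explicit the surjectivity of $\Gal(K^{\sep}/K)\to\pi_1^{\text{\'et}}(\mathscr{H}_d,x)$, which follows from normality of the smooth scheme $\mathscr{H}_d$ and is what upgrades the factorization's inclusion $G_d\subseteq\im(\phi_d)$ to equality; the paper leaves this implicit in its citation, and your remark that integrality of $\mathscr{Y}_d$ is not needed (which matters for $d=7$) is also right.
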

Using this alternative description of $G_d$, we obtain the transitivity of $G_d$ as a corollary of Proposition \ref{prop:Yd-irred}.
\begin{corollary}
\label{cor:transitivity-of-Gd}
    The action of $G_d$ is transitive on the set of exceptional curves $\Gamma_d$.
\end{corollary}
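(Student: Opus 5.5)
We deduce the transitivity directly from Proposition \ref{prop:Yd-irred} combined with Corollary \ref{lem:identification-of-galois-groups}. By that corollary, $G_d$ is, up to conjugacy, the image of $\phi_d:\pi_1^{\text{\'et}}(\mathscr{H}_d,x)\to\Aut(\Gamma_d)$. The action of this image on $\Gamma_d$ is exactly the monodromy action on the geometric fiber $\pi_d^{-1}(x)$. So it suffices to show that $\pi_1^{\text{\'et}}(\mathscr{H}_d,x)$ acts transitively on $\pi_d^{-1}(x)$. Conjugation does not affect transitivity.

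For $1\le d\le 6$ we invoke the standard dictionary of Galois categories (\cite{Szamuely2009}). Since $\mathscr{H}_d$ is connected, the fiber functor gives an equivalence between finite \'etale covers of $\mathscr{H}_d$ and finite continuous $\pi_1^{\text{\'et}}(\mathscr{H}_d,x)$-sets. Under this equivalence, connected components of the cover correspond to orbits.

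Now $\mathscr{Y}_d$ is \'etale over the smooth scheme $\mathscr{H}_d$ (Lemma \ref{lem:param-for-degree-3-7}, and smoothness of the open subsets of linear systems for $d=1,2$), so $\mathscr{Y}_d$ is smooth, hence normal. For a normal noetherian scheme, irreducible is equivalent to connected. Proposition \ref{prop:Yd-irred} says $\mathscr{Y}_d$ is irreducible, so it is connected, and the monodromy action on the fiber is therefore transitive.

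One subtlety is the base field. Proposition \ref{prop:Yd-irred} gives geometric irreducibility, which implies irreducibility over $k$ itself, and this is what the argument uses. The geometric point $x$ lies over the generic point of $\mathscr{H}_d$ over $k$, so no passage to $\bar k$ is needed.

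The case $d=7$ must be handled separately, since Proposition \ref{prop:Yd-irred} does not cover it. Here $n_7=3$, and the three exceptional curves form a chain $E_1-L-E_2$. The middle curve $L$ meets both others, so every incidence-preserving permutation fixes $L$, and $\Gamma_7$ is not a single $\Aut(\Gamma_7)$-orbit. I would therefore interpret the statement for $d=7$ as transitivity on the orbits of $\Aut(\Gamma_7)\simeq S_2$, that is, on the pair $\{E_1,E_2\}$. To get this, I would exhibit a loop in $\mathscr{H}_7$ whose monodromy swaps the two blown-up points: the family of blow-ups of $\PP^2$ at an ordered pair of points, modulo the swap, is connected. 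Alternatively, I would apply the same connectedness argument to the degree-2 subcover of $\mathscr{Y}_7$ given by the two disjoint $(-1)$-curves, using the homogeneity argument of the $d=5,6$ case, since $\PGL_8$ acts transitively on $\mathscr{H}_7$ and $\Aut(X)$ swaps $E_1,E_2$.

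The main obstacle is this degenerate case $d=7$; for $d\le6$ the result is formal once the irreducible-implies-connected step (via normality) is checked.
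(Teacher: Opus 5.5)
Your argument is correct and is the paper's own: irreducibility of $\mathscr{Y}_d$ (Proposition~\ref{prop:Yd-irred}) gives connectedness, hence a single orbit of $\pi_1^{\text{\'et}}(\mathscr{H}_d,x)$ on the geometric fiber, and this transfers to $G_d$ through the identification of $G_d$ with $\im(\phi_d)$; the normality step is harmless but unnecessary, since any irreducible space is already connected. You are also right that the statement literally fails for $d=7$, where the strict transform of the line through the two blown-up points is fixed by all of $\Aut(\Gamma_7)$; the paper only intends and uses the corollary for $1\le d\le 6$, and proves $G_7=S_2$ separately in the proof of the main theorem.
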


\section{Proof of the main theorem}
\label{sec:proof}
We now complete the proof of Theorem \ref{thm:main} via downward induction on the degree $d$. Write $|\Gamma_d|$ for the set of exceptional curves on a del Pezzo surface of degree $d$.
\begin{lemma}
\label{lem:induction}
    Let $1\leq d\leq 6$ and $K$ be an extension of $k$. Suppose there exists a del Pezzo surface $X'$ of degree $d+1$ over $K$  on which the Galois group of the exceptional curves is maximal. Then 
    $G_d$ contains the stabilizer of some element of $|\Gamma_d|$ in $\Aut(\Gamma_d)$, which is isomorphic to $\Aut(\Gamma_{d+1})$.
    \begin{proof}
        Let $L$ be the function field of $X'$. The generic point of $X'$ gives an $L$-point $P$ of the base change $X'_L$.
        We can blow up $X'_L$ at $P$ to obtain a del Pezzo surface $Z$ of degree $d$ over $L$ with a rational line. The image of the action of $\Gal(L^{\sep}/L)$ on the exceptional lines on $Z$ fixes this rational line and is equal to image of the Galois action on the exceptional curves on $X'_L$. Since $L\cap K^{\sep}=K$, 
         this is equal to the image of the action of $\Gal(K^{\sep}/K)$ on the exceptional lines on $X'$, which is $\Aut(\Gamma_{d+1})$ by assumption.   
        This realizes $\Aut(\Gamma_{d+1})$ as a subgroup of the Galois group of the exceptional curves of the generic del Pezzo surface of degree $d$ over $L$, which is a subgroup of $G_d$ by Lemma \ref{lem:passing-to-field-extension} applied to the extension $L/k$. Moreover, this subgroup is the stabilizer in $\Aut(\Gamma_d)$ of a line in $|\Gamma_d|$ by construction.
    \end{proof}
\end{lemma}

\begin{lemma}
\label{lem:passing-to-field-extension}
Let $L/k$ be any field extension.
    Let $G_d'$ be the image of 
    \[
    \phi_{d,L}:\pi_1^{\text{\'et}}(\mathscr{H}_{d,L},x')\to \Aut(\Gamma_d),
    \]
    where $\phi_{d,L}$ is the base change of $\phi_d$ to $L$, $x'$ is a geometric point of $\mathscr{H}_{d,L}$, and $\pi_1^{\text{\'et}}(\mathscr{H}_{d,L},x')$ acts on the geometric fibers of 
    \[
    \pi_{d,L}: \mathscr{Y}_{d,L}\to \mathscr{H}_{d,L}.
    \]
    Then we have $G_d'\subset G_d$.
    \begin{proof}
        This follows from the following commutative diagram with the obvious homomorphisms:
       \[\begin{tikzcd}
            \pi_1^{\text{\'et}}(\mathscr{H}_{d,L},x') \arrow[r]\arrow[d] & \Aut(\Gamma_d)\\
            \pi_1^{\text{\'et}}(\mathscr{H}_d,x')\arrow[ru] &
        \end{tikzcd}.\]
    \end{proof}
\end{lemma}
\begin{theorem}
    For any field $k$ and any $1\leq d\leq 7$,
    we have $G_d=\Aut(\Gamma_d)$. 
\end{theorem}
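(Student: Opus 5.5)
Since Lemma \ref{lem:induction} lets us pass from degree $d+1$ to degree $d$, we argue by downward induction on $d$, starting at $d=7$. Before the induction step we check that the hypothesis of Lemma \ref{lem:induction} can always be met: by the induction hypothesis $G_{d+1}=\Aut(\Gamma_{d+1})$, so the generic del Pezzo surface of degree $d+1$, over $K=k(\mathscr{H}_{d+1})$, is a del Pezzo surface over an extension of $k$ with maximal Galois action on its exceptional curves.

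\emph{Base case $d=7$.} Here $\Aut(\Gamma_7)\simeq S_2$. A degree $7$ del Pezzo surface has three exceptional curves forming a chain $E_1 - L - E_2$, and the automorphism group swaps the two ends $E_1,E_2$. It therefore suffices to find some point of $\mathscr{H}_7$ over some extension $L/k$ at which Galois swaps $E_1$ and $E_2$, and then invoke Lemma \ref{lem:passing-to-field-extension}. Take $L=k(s,t)$ and the quadratic polynomial $x^2-sx+t$, which is separable with Galois group $S_2$ in every characteristic. Blow up $\PP^2_L$ at the degree $2$ closed point defined by the conjugate pair of roots. The two exceptional curves over this point are then swapped by Galois. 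Embedding the resulting surface anticanonically gives an $L$-point of $\mathscr{H}_7$, and we conclude $G_7=S_2$.

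\emph{Induction step.} Fix $1\le d\le 6$. By Lemma \ref{lem:induction}, $G_d$ contains the stabilizer $H=\Stab(E)\simeq\Aut(\Gamma_{d+1})$ of some $E\in|\Gamma_d|$. By Corollary \ref{cor:transitivity-of-Gd}, $G_d$ acts transitively on $|\Gamma_d|$. Hence
\[|G_d| = n_d\,|G_d\cap \Stab(E)| = n_d\,|\Aut(\Gamma_{d+1})|.\]
It remains to check that this equals $|\Aut(\Gamma_d)|$. By orbit–stabilizer applied inside $\Aut(\Gamma_d)$, this holds exactly when $\Aut(\Gamma_d)$ is itself transitive on $|\Gamma_d|$ and its point stabilizer is isomorphic to $\Aut(\Gamma_{d+1})$. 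Both facts are classical: the stabilizer statement is the content of Lemma \ref{lem:induction}, and transitivity of $W(E_{9-d})$ on exceptional classes is standard. One can also just compare orders directly:
\begin{align*}
|W(E_8)| &= 240\cdot|W(E_7)|, & |W(E_7)| &= 56\cdot|W(E_6)|, & |W(E_6)| &= 27\cdot|W(D_5)|,\\
|W(D_5)| &= 16\cdot 120, & 120 &= 10\cdot 12, & 12 &= 6\cdot 2.
\end{align*}
Since $G_d\subset\Aut(\Gamma_d)$ and the two groups have the same order, $G_d=\Aut(\Gamma_d)$.

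\emph{Main obstacle.} The one point needing care is the hypothesis of Lemma \ref{lem:induction}: the surface $X'$ must have maximal Galois action over some field extending $k$, and the generic surface of degree $d+1$ provides exactly that. The substantive input is transitivity, Proposition \ref{prop:Yd-irred}. Granting it, only the base case needs an explicit construction, and that construction works uniformly in all characteristics because generic quadratic polynomials are separable.
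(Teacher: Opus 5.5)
Your proof is correct and follows the paper's argument. The base case blows up $\PP^2$ at a degree-$2$ point with separable residue field and applies Lemma~\ref{lem:passing-to-field-extension}, and the induction step combines Lemma~\ref{lem:induction} (applied to the generic surface of degree $d+1$) with the transitivity from Corollary~\ref{cor:transitivity-of-Gd}. The only difference is cosmetic: you finish with an orbit--stabilizer order count, while the paper uses directly that a transitive subgroup containing a full point stabilizer is the whole group. That same observation makes your table of orders unnecessary, since transitivity of $G_d$ already forces $|\Aut(\Gamma_d)|=n_d\,|\Stab(E)|=|G_d|$.
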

\begin{proof}
    We first prove the theorem for $d=7$, and then proceed by downward induction on $d$. Recall from Table \ref{tab:aut-groups-of-lines} that $\Aut(\Gamma_7)=S_2$, which permutes the two skew exceptional curves. To show that $G_7=S_2$, we can take a field extension $K/k$ with a degree 2 Galois extension $L/K$, then the blowup $X':=\Bl_P(\PP^2_K)$ at a closed point $P\in \PP^2_K$ with residue field $L$ is a del Pezzo surface of degree $7$ defined over $K$ and has Galois action $S_2$ on the exceptional curves. This implies $G_7=S_2$ by Lemma \ref{lem:passing-to-field-extension}.
    
    Now we explain the induction step. Recall that for any finite group $G$ acting on a set $S$, if $H\subset G$ is a subgroup acting transitively on $S$ and contains the stabilizer of some element $s\in S$, then $H=G$. Apply this with $G=\Aut(\Gamma_d)$, $S=|\Gamma_d|$, and $H=G_d$. By Corollary \ref{cor:transitivity-of-Gd}, $G_d$ acts transitively on $|\Gamma_d|$, so it suffices to show that $G_d$ contains the stabilizer of a line in $|\Gamma_d|$, which is just a copy of $\Aut(\Gamma_{d+1})$. We conclude by Lemma \ref{lem:induction} and the base case $d=7$.
\end{proof}
\section{Consequence for Brauer-Manin obstruction on cubic surfaces}
\label{sec:BM}

For a global field $k$, the action of  $\Gal(k_s/k)$ on the 27 lines on a cubic surface $X$ over $k$ is related to the Brauer-Manin obstruction on $X$ as follows.
\begin{proposition}[\cite{Swinnerton-Dyer_1993}]
\label{prop:full-galois-implies-no-BM}
    Let $k$ be a global field. Let $X$ be a smooth cubic surface over $k$ such that the action of $\Gal(k_s/k)$ on the 27 lines in $X_{k_s}$ is the full $W(E_6)$. Then $H^1(k, \Pic X_{k_s})=0$. In particular, there is no Brauer-Manin obstruction.
    \begin{proof}
        Let $K$ be the smallest (finite) extension of $k$ over which the 27 lines on $X_{k_s}$ are defined. Then the Galois group $G=\Gal(K/k)$ embeds into $W(E_6)$ via its action on the 27 lines by the minimality of $K$. On the other hand, $\Pic X_{k_s}=\Pic X_K$ and $H^1(k,\Pic X_{k_s})=H^1(G,\Pic X_K)$. So $H^1(k,\Pic X_{k_s})$ is killed by $|G|$, which divides $|W(E_8)|=2^7\cdot3^4\cdot5$. By Lemma 1 of \cite{Swinnerton-Dyer_1993}, if the order of $H^1(k,\Pic X_{k_s})$ is divisible by 2, then there is a double-six $S$ on $X_{k_s}$ such that $G$ maps $S$ into itself. By Lemma 4 of \cite{Swinnerton-Dyer_1993}, if the order of $H^1(k,\Pic X_{k_s})$ is divisible by 3, then there is a triple-nine on $X_{k_s}$ such that $G$ maps each nine into itself. Neither of these can happen if $G=W(E_6)$. Moreover, Lemma 9 of \cite{Swinnerton-Dyer_1993} says that $H^1(k,\Pic X_{k_s})$  has order not divisible by 5. This shows that $H^1(k,\Pic X_{k_s})$  must be trivial.
    \end{proof}
\end{proposition}
\begin{remark}
    Swinnerton-Dyer worked over a number field $k$; however, his computation of the Galois action on $\Pic X_{k_s}$ holds over  any global field $k$, and hence also the conclusion on  $H^1(k,\Pic X_{k_s})$.
\end{remark}

When $k=\Q$, Poonen and Voloch (\cite{PoonenVoloch2004-Random}, Proposition 3.4) observed that the maximality of the Galois action on the generic cubic surface implies the maximality of the action for 100\% of cubic surfaces over $\Q$ (via Hilbert's irreducibility theorem), and hence these surfaces have no Brauer-Manin obstruction by Proposition \ref{prop:full-galois-implies-no-BM}.
Now that we have the maximality of the Galois action in arbitrary characteristics, we can extend this result to all rational function fields via the following effective version of Hilbert's irreducibility for rational function fields.

\begin{theorem}[cf. \cite{BarySorokerEntin2021_ExplicitHilbert}, Corollary 3.5]
\label{thm:hilbert-function-field}
    Let $k=\F_q(u)$ be any rational function field.
    Let $f(T_1,\dots,T_n,X)=f(\TT,X)\in k[\TT,X]$ be an irreducible polynomial with coefficients in $\F_q[u]$ that is separable and of positive degree in $X$. Let $G$ be the Galois group of the splitting field of $f$ over $k(\TT)$ as a polynomial in $X$, and for each $\ti\in \F_q[u]^n$, let $G_{\ti}$ be the Galois group for the splitting field of $f(\ti,X)$ over $k$. Let $N(d)$ be the number of $\ti\in \F_q[u]^n_{\leq d}$ such that $G_{\ti}\simeq G$. Then
    \[
    \frac{N(d)}{q^{n(d+1)}}=1+O_f(q^{-\frac{d+1}{2}+r}(d+1)),
    \]
    where $r\leq \deg_X(f)$ is the degree of the constant field extension of $f$. 
    \begin{proof}
        The statement is essentially identical to \cite[Corollary 3.5]{BarySorokerEntin2021_ExplicitHilbert}, except we do the counting in $\F_q[u]^n_{\leq d}$, the set of $n$-tuples of polynomials with degree at most $d$, as opposed to the set of $n$-tuples of monic degree $d$ polynomials.
        The proof follows the same line of argument as \cite[Corollary 3.5]{BarySorokerEntin2021_ExplicitHilbert}, except when applying \cite[Theorem 3.2]{Hsu1996LargeSieve}, in their notation, we consider the intersection of $X$ with $B(0,d)$ instead (see the footnote for the proof of \cite[Proposition 3.4]{BarySorokerEntin2021_ExplicitHilbert}).
    \end{proof}
\end{theorem}

\begin{theorem}
    Let $k=\F_q(u)$. Then
    100\% of cubic surfaces over $k$, when ordered by height  of the coefficients of their equations, have no Brauer-Manin obstruction.
    \begin{proof}
        Consider the finite \'etale cover 
        \[
        \pi_3: \mathscr{Y}_3\to \mathscr{H}_3\subset\PP^{19}
        \]
        defined in Subsection \ref{subsec:def-of-Yd}. By Proposition \ref{prop:full-galois-implies-no-BM}, it suffices to show that the subset $\Omega$ of $k$-points of $\mathscr{H}_3\subset\PP^{19}$ whose fiber has Galois group equal to $\Aut(\Gamma_3)=W(E_6)$ has density equal to 1, when $\PP^{19}(k)$ is ordered by height (in this case, the degrees of the homogeneous coordinates as polynomials in $\F_q[u]$). By Theorem \ref{thm:main}, the generic fiber of $\pi_3$ has Galois group $W(E_6)$, so the conclusion follows from Theorem \ref{thm:hilbert-function-field}. 
    \end{proof}
\end{theorem}

\section{Acknowledgement}
The author thanks Bjorn Poonen for many helpful suggestions.
\bibliographystyle{amsalpha}
\bibliography{ref}
\end{document}